
\documentclass[preprint,12pt]{elsarticle}
\usepackage{amsthm,amsfonts,amssymb,amscd,amsmath,enumerate,verbatim,calc,graphicx,geometry}
\usepackage[all]{xy}
\newtheorem{theorem}{Theorem}[section]
\newtheorem{lemma}[theorem]{Lemma}

\newtheorem{corollary}[theorem]{Corollary}
\theoremstyle{definition}
\theoremstyle{definitions}
\newtheorem{definition}[theorem]{Definition}

\newtheorem{remark}[theorem]{Remark}
\newtheorem{example}[theorem]{Example}
\theoremstyle{notations}

\theoremstyle{remarks}

\journal{}

\begin{document}

\begin{frontmatter}



\title{Adjointness of Suspension and Shape Path Functors}


\author[label2]{Tayyebe~Nasri}
\ead{tnasri72@yahoo.com}
\author[label1]{Behrooz~Mashayekhy}
\ead{bmashf@um.ac.ir}
\author[label1]{Hanieh~Mirebrahimi\corref{cor1}}
\ead{h\_mirebrahimi@um.ac.ir}

\address[label1]{Department of Pure Mathematics, Center of Excellence in Analysis on Algebraic Structures, Ferdowsi University of Mashhad,\\
P.O.Box 1159-91775, Mashhad, Iran.}
\address[label2]{Department of Pure Mathematics, Faculty of Basic Sciences,  University of Bojnord,\\
 Bojnord, Iran.}
\cortext[cor1]{Corresponding author}

\begin{abstract}
In this paper, we introduce a subcategory $\widetilde{Sh}_*$ of Sh$_*$ and obtain some results in this subcategory. First we show that there is a natural bijection $Sh (\Sigma (X, x), (Y,y))\cong Sh((X,x),Sh((I, \dot{I}),(Y,y)))$, for every $(Y,y)\in \widetilde{Sh}_*$ and $(X,x)\in Sh_*$. By this fact, we prove that for any pointed topological space $(X,x)$ in $\widetilde{Sh}_*$, $\check{\pi}_n^{top}(X,x)\cong \check{\pi}_{n-k}^{top}(Sh((S^k, *),(X,x)), e_x)$, for all $1\leq k \leq n-1$.
\end{abstract}

\begin{keyword}
Shape category\sep Topological shape homotopy group\sep Shape group\sep Suspensions.
\MSC[2010]  55P55\sep 55Q07\sep  54H11\sep 55P40

\end{keyword}

\end{frontmatter}

\section{Introduction and Motivation}
Mor\'on et al. \cite{M1} gave a complete, non-Archimedean metric (or ultrametric) on the set of shape morphisms between two unpointed compacta (compact metric spaces) $X$ and $Y$, $Sh(X, Y)$. They mentioned that this construction can be translated to the pointed case. Consequently, as a particular case, they obtained a complete ultrametric induces a norm on the shape groups of a compactum Y and then presented some results on these topological groups \cite{M2}. Also, Cuchillo-Ibanez et al. \cite{CM1} constructed several generalized ultrametrics in the set of shape morphisms between topological spaces and obtained semivaluations and valuations on the groups of shape equivalences and $k$th shape groups. On the other hand, Cuchillo-Ibanez et al. \cite{CM} introduced a topology on the set $Sh(X, Y)$, where $X$ and $Y$ are arbitrary topological spaces, in such a way that it extended topologically the construction given in \cite{M1}. Also, Moszy\'nska \cite{M} showed that the $k$th shape group $\check{\pi}_k(X,x)$, $k\in \Bbb{N}$, is isomorphic to the set $Sh((S^k, *),(X,x))$ consists of all shape morphisms $(S^k, *) \rightarrow (X, x)$ with a group operation, for all compact Hausdorff space $(X, x)$. Note that, Bilan \cite{B1} mentioned that this fact is true for all topological spaces.
 The authors \cite{Na} applied this topology on the set of shape morphisms between pointed spaces and proved that the $k$th shape group $\check{\pi}_k(X,x)$, $k\in \Bbb{N}$, with the above topology is a Hausdorff topological group, denoted by $\check{\pi}_k^{top}(X,x)$. In this paper, we introduce a subcategory $\widetilde{Sh}_*$ of Sh$_*$ and obtain some results in this subcategory. It is well-known that the pair $(\Sigma , \Omega)$ is an adjoint pair of functors  on hTop$_*$ and therefore, there is a natural bijection $Hom(\Sigma (X,x),(Y,y))\cong Hom((X,x), \Omega (Y,y))$, for every pointed topological spaces $(X,x)$ and $(Y,y)$. In this paper, we show that there is a natural bijection $Sh (\Sigma (X, x), (Y,y))\cong Sh((X,x),(Sh((I, \dot{I}),(Y,y)),e_y))$, for every $(Y,y)\in \widetilde{Sh}_*$ and $(X,x)\in Sh_*$.  By this fact we conclude that the functor $Sh((I, \dot{I}),-)$ preserves inverse limits such as products, pullbacks, kernels, nested intersections and completions, provided inverse limit exists in the subcategory $\widetilde{Sh}_*$. Also, the functor $\Sigma$ preserves direct limits of connected spaces in this subcategory. As a consequence, if $(X\times Y, (x,y))$ is a product of pointed spaces $(X, x)$ and $(Y,y)$ in the subcategory $\widetilde{Sh}_*$, then 
$$\check{\pi}_1(X\times Y, (x,y))\cong\check{\pi}_1(X, x)\times\check{\pi}_1(Y, y).$$
 
It is well-known that for any pointed space $(X,x)$ and for all $1\leq k\leq n-1$, $\pi_n(X,x)\cong \pi_{n-k}(\Omega(X,x), e_x)$. In this paper, we show that for any pointed topological space $(X,x)$ in $\widetilde{Sh}_*$, $\check{\pi}_n(X,x)\cong \check{\pi}_{n-k}(Sh((S^k, *),(X,x)), e_x)$, for all $1\leq k \leq n-1$. We then exhibit an example in which this result dose not hold in the category Sh$_*$. 

 Endowed with the quotient topology induced by the natural surjective map $q:\Omega^n(X,x)\rightarrow \pi_n(X,x)$, where $\Omega^n(X,x)$ is the $n$th loop space of $(X,x)$ with the compact-open topology, the familiar homotopy group $\pi_n(X,x)$ becomes a quasitopological group which is called the quasitopological $n$th homotopy group of the pointed space $(X,x)$, denoted by $\pi_n^{qtop}(X,x)$ (see \cite{B0,Br,Bra,G1}). Nasri et al. \cite{Na2}, showed that  for any pointed topological space $(X,x)$, $\pi_n^{qtop}(X,x)\cong \pi_{n-k}^{qtop}(\Omega^k(X,x), e_x)$, for all $1\leq k \leq n-1$. In this paper, we prove that for any pointed topological space $(X,x)$ in $\widetilde{Sh}_*$, $\check{\pi}_n^{top}(X,x)\cong \check{\pi}_{n-k}^{top}(Sh((S^k, *),(X,x)), e_x)$, for all $1\leq k \leq n-1$.

\section{Preliminaries}
In this section, we recall some of the main notions concerning the shape category and the pro-HTop (see \cite{MS}).
Let $\mathbf{X}=(X_{\lambda},p_{\lambda\lambda'},\Lambda)$ and $\mathbf{Y}=(Y_{\mu},q_{\mu\mu'},M)$ be two inverse systems in HTop. A {\it pro-morphism} of inverse
systems, $(f,f_{\mu}): \mathbf{X} \rightarrow \mathbf{Y} $, consists of an index function $f : M \rightarrow\Lambda$ and of mappings $f_{\mu}: X_{f(\mu)} \rightarrow Y_{\mu} $, $\mu\in M$, such that for every related pair $\mu\leq \mu'$ in $M$, there exists a $\lambda\in\Lambda$, $\lambda\geq f(\mu),f(\mu')$ so that  $$q_{\mu\mu'}f_{\mu'}p_{f(\mu')\lambda}\simeq f_{\mu}p_{f(\mu)\lambda}.$$

The {\it composition} of two pro-morphisms $(f,f_{\mu}): \mathbf{X} \rightarrow \mathbf{Y} $ and $(g,g_{\nu}): \mathbf{Y} \rightarrow \mathbf{Z}=(Z_{\nu},r_{\nu\nu'},N) $ is also a pro-morphism $ (h,h_{\nu})=(g,g_{\nu})(f,f_{\mu}): \mathbf{X} \rightarrow \mathbf{Z} $, where $h=fg$ and $h_{\nu}=g_{\nu}f_{g(\nu)}$. The {\it identity pro-morphism} on $\mathbf{X}$ is pro-morphism $(1_{\Lambda}, 1_{X_{\lambda}}): \mathbf{X} \rightarrow \mathbf{X} $, where $1_{\Lambda}$ is the identity function.
A pro-morphism $(f,f_{\mu}): \mathbf{X} \rightarrow \mathbf{Y} $ is said to be {\it equivalent} to a pro-morphism $(f',f'_{\mu}): \mathbf{X} \rightarrow \mathbf{Y} $, denoted by $(f,f_{\mu})\sim (f',f'_{\mu})$, provided every $\mu\in M$ admits a $\lambda\in\Lambda$ such that $\lambda\geq f(\mu),f'(\mu)$ and
$$f_{\mu}p_{f(\mu)\lambda}\simeq f'_{\mu}p_{f'(\mu)\lambda}.$$

The relation $\sim$ is an equivalence relation. The {\it category} pro-HTop has as objects, all inverse systems $\mathbf{X}$ in HTop and as morphisms, all equivalence classes $\mathbf{f}=[(f,f_{\mu})]$. The composition of $\mathbf{f}=[(f,f_{\mu})]$ and $\mathbf{g}=[(g,g_{\nu})]$ in pro-HTop is well defined by putting
$$\mathbf{gf}=\mathbf{h}=[(h,h_{\nu})].$$

An HPol-expansion of a topological space $X$ is a morphism $\mathbf{p} :X\rightarrow \mathbf{X}$ in pro-HTop, where $\mathbf{X}$ belongs to pro-HPol characterised by the following two properties:\\
(E1) For every $P\in HPol$ and every map $h:X\rightarrow P$ in HTop, there is a $\lambda\in \Lambda$ and a map $f:X_{\lambda}\rightarrow P$ in HPol such that $fp_{\lambda}\simeq h$.\\
(E2) If $f_0, f_1:X_{\lambda}\rightarrow P$ satisfy $f_0p_{\lambda}\simeq f_1p_{\lambda}$, then there exists a $\lambda'\geq\lambda$ such that $f_0p_{\lambda\lambda'}\simeq f_1p_{\lambda\lambda'}$.

Let $\mathbf{p} :X\rightarrow \mathbf{X}$ and $\mathbf{p'} :X\rightarrow \mathbf{X'}$ be two HPol-expansions of an space $X$ in HTop, and let $\mathbf{q} : Y \rightarrow \mathbf{Y}$ and $\mathbf{q'} : Y \rightarrow \mathbf{Y'}$ be two HPol-expansions of an space $Y$ in HTop. Then there exist two natural isomorphisms $\mathbf{i}:\mathbf{X}\rightarrow \mathbf{X}'$ and $\mathbf{j}:\mathbf{Y}\rightarrow \mathbf{Y}'$ in pro-HTop. A morphism $\mathbf{f}:\mathbf{X}\rightarrow \mathbf{Y}$ is said to be {\it equivalent} to a morphism $\mathbf{f'}:\mathbf{X'}\rightarrow \mathbf{Y'}$, denoted by $\mathbf{f}\sim\mathbf{f'}$, provided the following diagram in pro-HTop commutes:
\begin{equation}
\label{dia}\begin{CD}
\mathbf{X}@>\mathbf{i}>>\mathbf{X'}\\
@VV \mathbf{f}V@V \mathbf{f'}VV\\
\mathbf{Y}@>\mathbf{j}>>\mathbf{Y'}.
\end{CD}\end{equation}

Now, the {\it shape category} Sh is defined as follows: The objects of Sh are topological spaces. A morphism $F:X\rightarrow Y$ is the equivalence class $<\mathbf{f}>$ of a mapping $\mathbf{f}:\mathbf{X}\rightarrow \mathbf{Y}$ in pro-HTop.
The {\it composition} of $F=<\mathbf{f}>:X\rightarrow Y$ and $G=<\mathbf{g}>:Y\rightarrow Z$ is defined by the representatives, i.e., $GF=<\mathbf{g}\mathbf{f}>:X\rightarrow Z$. The {\it identity shape morphism} on a space $X$, $1_X:X\rightarrow X$, is the equivalence class $<1_\mathbf{X}>$ of the identity morphism $1_\mathbf{X}$ in pro-HTop.

Let $\mathbf{p}:X\rightarrow \mathbf{X}$ and $\mathbf{q}:Y\rightarrow \mathbf{Y}$ be HPol-expansions of $X$ and $Y$, respectively. Then for every morphism $f:X\rightarrow Y$ in HTop, there is a unique morphism $\mathbf{f}:\mathbf{X}\rightarrow \mathbf{Y}$ in pro-HTop such that the following diagram commutes in pro-HTop.
\begin{equation}
\label{dia}\begin{CD}
\mathbf{X}@<<\mathbf{p}<X\\
@VV \mathbf{f}V@V fVV\\
\mathbf{Y}@<<\mathbf{q}<Y.
\end{CD}\end{equation}

If we take other HPol-expansions $\mathbf{p'}:X\rightarrow \mathbf{X'}$ and $\mathbf{q'}:Y\rightarrow \mathbf{Y'}$, we obtain another morphism $\mathbf{f'}:\mathbf{X'}\rightarrow \mathbf{Y'}$ in pro-HTop such that $\mathbf{f'}\mathbf{p'^*}=\mathbf{q'}f$ and so we have $\mathbf{f}\sim\mathbf{f'}$. Hence every morphism $f\in HTop(X,Y)$ yields an equivalence class $<[\mathbf{f}]>$, i.e., a shape morphism $F:X\rightarrow Y$ which is denoted by $\mathcal{S}(f)$. If we put $\mathcal{S}(X)=X$ for every topological space $X$, then we obtain a functor $\mathcal{S}:HTop\rightarrow Sh$, called the {\it shape functor}. Also if $Y\in$HPol, then every shape morphism $F:X\rightarrow Y$ admits a unique morphism $f:X\rightarrow Y$ in HTop such that $F=\mathcal{S}(f)$ \cite[Theorem 1.2.4]{MS}.

Similarly, we can define the categories pro-HTop$_*$ and Sh$_*$ on pointed topological spaces (see \cite{MS}).
\section{Main Results}
In this section, we introduce a subcategory $\widetilde{Sh}_*$ of Sh$_*$ consists of all  pointed topological spaces having bi-expansions.  Then we consider the well-known suspension functor $\Sigma: Sh_*\rightarrow Sh_*$ (see \cite{MS}) and $Sh((I,\dot{I}),-): Sh_*\rightarrow Sh_*$ and show that there is a natural bijection
$Sh (\Sigma (X, x), (Y,y))\cong Sh((X,x),(Sh((I, \dot{I}),(Y,y)),e_y))$, for every $(Y,y)\in \widetilde{Sh}_*$ and $(X,x)\in Sh_*$. Then using this bijection we conclude some results in subcategory  $\widetilde{Sh}_*$. 
\begin{definition}
We say that a pointed topological space $(X,x)$ has a bi-expansion $\mathbf{p}:(X,x)\rightarrow (\mathbf{X},\mathbf{x})$ whenever $\mathbf{p}$ is an HPol$_*$-expansion of $(X,x)$ such that  $\mathbf{p_*}:Sh((I,\dot{I}),(X,x))\rightarrow \mathbf{Sh}((I,\dot{I}),(X,x))$ is an HPol$_*$-expansion of $Sh((I,\dot{I}),(X,x))$.
\end{definition}

In follow, we recall some conditions on topological space $X$ under which $X$ has a bi-expansion.
\begin{remark}\cite[Remark 4.11]{Na}.
  If $\mathbf{p}:(X,x)\rightarrow (\mathbf{X},\mathbf{x})$ is an HPol$_*$-expansion of $X$, then $\mathbf{p_*}:Sh((S^k,*),(X,x))\rightarrow \mathbf{Sh}((S^k,*),(X,x))$ is an inverse limit of $\mathbf{Sh}((S^k,*),(X,x))= (Sh((S^k,*),(X_{\lambda},x_{\lambda})), (p_{\lambda\lambda'})_*,\Lambda)$ (see \cite[Theorem 2]{CM}). Now, if $Sh((S^k,*),(X,x))$ is compact and $Sh((S^k,*),(X_{\lambda},x_{\lambda}))$ is compact polyhedron for all $\lambda\in \Lambda$, then by \cite[Remark 1]{FZ}, $\mathbf{p_*}$ is an HPol$_*$-expansion of $Sh((S^k,*),(X,x))$.
\end{remark}
\begin{lemma}\cite[Lemma 4.12]{Na}.\label{n1}
Let $(X,x)$ have an HPol$_*$-expansion $\mathbf{p}:(X,x)\rightarrow ((X_{\lambda},x_{\lambda}),p_{\lambda\lambda'},\Lambda)$ such that $\pi_k(X_{\lambda},x_{\lambda})$ is finite, for every $\lambda\in \Lambda$.  Then  \linebreak $\mathbf{p_*}:Sh((S^k,*),(X,x))\rightarrow \mathbf{Sh}((S^k,*),(X,x))$ is an HPol$_*$-expansion of \linebreak $Sh((S^k,*),(X,x))$, for all $k\in \Bbb{N}$.
\end{lemma}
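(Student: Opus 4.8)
The plan is to reduce the statement to the criterion supplied by the remark preceding this lemma, which asserts that $\mathbf{p_*}$ is an HPol$_*$-expansion of $Sh((S^k,*),(X,x))$ as soon as two conditions hold: that each term $Sh((S^k,*),(X_{\lambda},x_{\lambda}))$ is a compact polyhedron, and that the limit space $Sh((S^k,*),(X,x))$ itself is compact. Thus the entire argument amounts to extracting these two topological facts from the single finiteness hypothesis on the groups $\pi_k(X_{\lambda},x_{\lambda})$.

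First I would identify each term of the inverse system. Since $X_{\lambda}$ is a polyhedron, the fact recorded in the preliminaries (namely \cite[Theorem 1.2.4]{MS}, that shape morphisms into an object of HPol are realised by unique homotopy classes) gives a natural bijection
$$Sh((S^k,*),(X_{\lambda},x_{\lambda}))\cong \pi_k(X_{\lambda},x_{\lambda}),$$
whose right-hand side is finite by assumption. Next I would verify that the topology of \cite{CM} on the left-hand side is in fact discrete: because the polyhedron $X_{\lambda}$ is its own rudimentary, one-term HPol$_*$-expansion, any two distinct shape morphisms $(S^k,*)\to(X_{\lambda},x_{\lambda})$ already differ at that single level and are therefore separated in the topology of \cite{CM}. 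Consequently each $Sh((S^k,*),(X_{\lambda},x_{\lambda}))$ is a finite discrete space, that is, a $0$-dimensional compact polyhedron, which establishes the first condition.

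For the second condition I would invoke the first assertion of the preceding remark, which via \cite[Theorem 2]{CM} exhibits $\mathbf{p_*}$ as an inverse limit of the system $(Sh((S^k,*),(X_{\lambda},x_{\lambda})),(p_{\lambda\lambda'})_*,\Lambda)$. An inverse limit of compact Hausdorff spaces is again compact Hausdorff, being a closed subspace of the Tychonoff product of its terms; since each term is finite and discrete, hence compact Hausdorff, the limit $Sh((S^k,*),(X,x))$ is compact. With both conditions in hand, the remark immediately yields that $\mathbf{p_*}$ is an HPol$_*$-expansion of $Sh((S^k,*),(X,x))$ for every $k\in\Bbb{N}$.

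The main obstacle is not the set-theoretic bookkeeping but the two topological compatibilities used above: one must be certain that the topology placed on shape-morphism sets in \cite{CM} restricts to the discrete topology on the finite sets $\pi_k(X_{\lambda},x_{\lambda})$, and that the identification furnished by \cite[Theorem 2]{CM} is a genuine homeomorphism onto the inverse limit rather than a mere bijection of underlying sets. Once these two points are confirmed, the remainder of the proof is a routine appeal to the stability of compactness under Tychonoff products and closed subspaces.
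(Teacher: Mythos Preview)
The paper does not supply its own proof of this lemma: it is quoted verbatim from \cite[Lemma 4.12]{Na}, so there is no in-paper argument to compare your proposal against. That said, your approach is exactly the one the surrounding text is set up to support, and is almost certainly the argument of \cite{Na}: the preceding remark (itself imported as \cite[Remark 4.11]{Na}) is stated precisely so that the lemma follows once one checks that each $Sh((S^k,*),(X_{\lambda},x_{\lambda}))$ is a compact polyhedron and that the limit $Sh((S^k,*),(X,x))$ is compact. Your identification of the former with the finite discrete set $\pi_k(X_{\lambda},x_{\lambda})$ and your Tychonoff argument for the latter are both correct.

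The two compatibility concerns you raise at the end are legitimate but are in fact covered by the references the paper already uses: discreteness of $Sh((S^k,*),(X_{\lambda},x_{\lambda}))$ for polyhedral $X_{\lambda}$ is precisely \cite[Corollary 1]{CM} (invoked later in the proofs of Lemmas \ref{lem1} and \ref{lem2}), and \cite[Theorem 2]{CM} furnishes $\mathbf{p_*}$ as a topological inverse limit, not merely a set-theoretic one. So no gap remains.
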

\begin{example}\cite[Example 4.13]{Na} (see also \cite{MS}).
Let $\Bbb{R}P^2$ be the real projective plane. Consider the map $\bar{f}:\Bbb{R}P^2\rightarrow \Bbb{R}P^2$ induced by the following commutative diagram:
\begin{equation}
\label{dia2}\begin{CD}
D^2@<<f< D^2\\
@VV \phi V@V \phi VV\\
\Bbb{R}P^2@<<\bar{f}<\Bbb{R}P^2,
\end{CD}
\end{equation}
 where $D^2=\{z\in \Bbb{C}\ \ | \ \ |z|\leq 1\}$ is the unit 2-cell, $f(z)=z^3$ and $\phi:D^2\rightarrow \Bbb{R}P^2$ is the quotient map  identifies pairs of points $\{z,-z\}$ of $S^1$. We consider $X$ as the inverse sequence $$\Bbb{R}P^2\stackrel{\bar{f}}{\longleftarrow}\Bbb{R}P^2\stackrel{\bar{f}}{\longleftarrow}\cdots.$$
 Since $\Bbb{R}P^2 $ is compact polyhedron, by \cite[Remark 1]{FZ} $X$ is compact and $\mathbf{p}:X\rightarrow (\Bbb{R}P^2, \bar{f}, \Bbb{N})$ is an HPol-expansion  of $X$. Since $\bar{f}$ is onto and $\pi_k(\Bbb{R}P^2 )\cong \Bbb{Z}_2$ is finite, $\mathbf{p_*}:Sh((S^k,*),(X,x))\rightarrow \mathbf{Sh}((S^k,*),(X,x))$ is an HPol$_*$-expansion of\linebreak $Sh((S^k,*),(X,x))$, for all $k\in \Bbb{N}$.
\end{example}
The well-known suspension functor $\Sigma: HTop_*\rightarrow HTop_*$ is extended to a suspension functor $\Sigma: Sh_*\rightarrow Sh_*$ (see \cite{MS}). Note that, if $(X,x)$ is a pointed topological space, then $\Sigma (X,x)=(\Sigma X,\Sigma x)$ is also a pointed topological space. Therefore, whenever $\mathbf{p}:(X,x)\rightarrow (\mathbf{X},\mathbf{x})$ is an HPol$_*$-expansion of $(X,x)$, then $\mathbf{\Sigma p}:\Sigma(X,x)\rightarrow \Sigma(\mathbf{X},\mathbf{x})=(\Sigma(X_{\lambda},x_{\lambda}),\Sigma p_{\lambda\lambda'},\Lambda)$ is an HPol$_*$-expansion of $\Sigma (X,x)$.
\begin{remark}
Let $(X,x)$ be a connected topological space and $\mathbf{p}:(X,x)\rightarrow (\mathbf{X},\mathbf{x})=((X_{\lambda},x_{\lambda}), p_{\lambda\lambda'},\Lambda)$ be an HPol$_*$-expansion of $(X,x)$. Since $X$ is connected, one can assume that all $X_{\lambda}$ are connected,  by \cite[Remark 4.1.1]{MS} and so $\pi_1(\Sigma(X_{\lambda},x_{\lambda}))=0$, for all $\lambda\in \Lambda$ (by Van Kampen Theorem). Therefore, the HPol$_*$-expansion  $\mathbf{\Sigma p}:\Sigma(X,x)\rightarrow \Sigma(\mathbf{X},\mathbf{x})$ satisfies in the conditions of Lemma \ref{n1} and so $\Sigma(X,x)\in \widetilde{Sh}_*$.
\end{remark}

Let $F:\Sigma (X, x)\rightarrow (Y,y)$ be a shape morphism represented by $\mathbf{f}:\Sigma (\mathbf{X}, \mathbf{x})\rightarrow (\mathbf{Y},\mathbf{y})$ consists of $f:M\rightarrow \Lambda$ and $f_{\mu}:\Sigma (X_{f(\mu)},x_{f(\mu)})\rightarrow (Y_{\mu},y_{\mu})$. If $(Y,y)$ has a bi-expansion $\mathbf{q}:(Y,y)\rightarrow (\mathbf{Y},\mathbf{y})$, then $F$ determines a map $F^{\sharp}:(X,x)\rightarrow (Sh((I, \dot{I}),(Y,y)),e_y)$ represented by $\mathbf{f^{\sharp}}:(\mathbf{X},\mathbf{x}) \rightarrow (\mathbf{Sh}((I, \dot{I}),(Y,y)),\mathbf{e_y})$ consists of  $f:M\rightarrow \Lambda$ and $f_{\mu}^{\sharp}:(X_{f(\mu)},x_{f(\mu)})\rightarrow (Sh((I, \dot{I}),(Y_{\mu},y_{\mu})),e_{y_{\mu}})$ which is defined as $f_{\mu}^{\sharp}(x)=\mathcal{S}(l_{x\mu})$, where $l_{x\mu}:(I, \dot{I})\rightarrow (Y_{\mu},y_{\mu})$ is a map in HTop$_*$ such that $l_{x\mu}(t)=f_{\mu}([x,t])$.

In the following lemma we show that $F^{\sharp}$ is a shape morphism.
\begin{lemma}\label{lem1}
The map $F^{\sharp}$ defined in the above is a shape morphism.
\end{lemma}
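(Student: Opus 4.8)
The plan is to verify the two conditions that make $\mathbf{f^{\sharp}}=(f,f_{\mu}^{\sharp})$ a genuine morphism of inverse systems in pro-HTop$_*$, after which the conclusion is immediate: since $\mathbf{p}$ is an HPol$_*$-expansion of $(X,x)$ and, because $(Y,y)$ carries a bi-expansion, $\mathbf{q_*}:(Sh((I,\dot{I}),(Y,y)),e_y)\to \mathbf{Sh}((I,\dot{I}),(Y,y))$ is an HPol$_*$-expansion of the target, the class $<\mathbf{f^{\sharp}}>$ is by definition a shape morphism $F^{\sharp}:(X,x)\to (Sh((I,\dot{I}),(Y,y)),e_y)$. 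Thus everything reduces to showing that each $f_{\mu}^{\sharp}$ is a well-defined based continuous map and that the family satisfies the pro-morphism commutativity condition.

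First I would check that $f_{\mu}^{\sharp}$ lands in $Sh((I,\dot{I}),(Y_{\mu},y_{\mu}))$ and is based: for any $x$ the assignment $t\mapsto f_{\mu}([x,t])$ carries $\dot{I}$ to $y_{\mu}$, because $[x,0]$ and $[x,1]$ are the suspension basepoint, so $l_{x\mu}$ is a genuine loop and $f_{\mu}^{\sharp}(x)=\mathcal{S}(l_{x\mu})$ makes sense; at $x=x_{f(\mu)}$ the loop $l_{x_{f(\mu)}\mu}$ is constant at $y_{\mu}$, whence $f_{\mu}^{\sharp}(x_{f(\mu)})=e_{y_{\mu}}$. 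Continuity is the crux. Here I would invoke the classical suspension--loop adjunction: since $\Sigma X_{f(\mu)}$ is a quotient of $X_{f(\mu)}\times I$, the continuous map $f_{\mu}:\Sigma X_{f(\mu)}\to Y_{\mu}$ has a continuous adjoint $\widehat{f_{\mu}}:X_{f(\mu)}\to \Omega(Y_{\mu},y_{\mu})$, $\widehat{f_{\mu}}(x)=l_{x\mu}$, into the loop space with the compact-open topology. Composing with the natural surjection $\Omega(Y_{\mu},y_{\mu})\to Sh((I,\dot{I}),(Y_{\mu},y_{\mu}))$, $\gamma\mapsto\mathcal{S}(\gamma)$, which is continuous for the topology of \cite{CM} on the (polyhedral) shape-morphism set, yields $f_{\mu}^{\sharp}=\mathcal{S}\circ\widehat{f_{\mu}}$ continuous.

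For the commutativity condition, fix a related pair $\mu\leq\mu'$ and choose, from the fact that $\mathbf{f}$ is a pro-morphism, an index $\lambda\geq f(\mu),f(\mu')$ together with a based homotopy $H:\Sigma X_{\lambda}\times I\to Y_{\mu}$ from $q_{\mu\mu'}f_{\mu'}(\Sigma p_{f(\mu')\lambda})$ to $f_{\mu}(\Sigma p_{f(\mu)\lambda})$. Using the identity $(\Sigma p_{f(\mu)\lambda})([x,t])=[p_{f(\mu)\lambda}(x),t]$, I would rewrite $f_{\mu}^{\sharp}p_{f(\mu)\lambda}(x)=\mathcal{S}\big(t\mapsto f_{\mu}(\Sigma p_{f(\mu)\lambda})[x,t]\big)$ and $(q_{\mu\mu'})_*f_{\mu'}^{\sharp}p_{f(\mu')\lambda}(x)=\mathcal{S}\big(t\mapsto q_{\mu\mu'}f_{\mu'}(\Sigma p_{f(\mu')\lambda})[x,t]\big)$, so that these two maps $X_{\lambda}\to Sh((I,\dot{I}),(Y_{\mu},y_{\mu}))$ are exactly the two ends of $G:X_{\lambda}\times I\to Sh((I,\dot{I}),(Y_{\mu},y_{\mu}))$, $G(x,s)=\mathcal{S}(t\mapsto H([x,t],s))$. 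Continuity of $G$ again follows from the adjunction applied to $H$ followed by the surjection onto the shape-morphism set, and $G$ is a based homotopy because $H$ fixes the suspension basepoint; hence $(q_{\mu\mu'})_*f_{\mu'}^{\sharp}p_{f(\mu')\lambda}\simeq f_{\mu}^{\sharp}p_{f(\mu)\lambda}$, as required.

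I expect the main obstacle to be the continuity statements of the second and third paragraphs, namely controlling the topology that \cite{CM} puts on $Sh((I,\dot{I}),(Y_{\mu},y_{\mu}))$ and checking that the passage from the compact-open loop space through $\mathcal{S}$ is continuous (and likewise for $G$); the combinatorics of the pro-morphism condition, by contrast, is a purely formal transport of the homotopy $H$ already supplied by $\mathbf{f}$. It is worth emphasizing where the hypothesis enters: the bi-expansion assumption on $(Y,y)$ is precisely what makes the target inverse system $\mathbf{Sh}((I,\dot{I}),(Y,y))$ an HPol$_*$-expansion, and this is indispensable for the final identification of $<\mathbf{f^{\sharp}}>$ as an honest shape morphism rather than a mere pro-morphism.
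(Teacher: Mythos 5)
Your overall architecture matches the paper's: verify that each $f_{\mu}^{\sharp}$ is a based continuous map, verify the pro-morphism compatibility over each pair $\mu\leq\mu'$ by transporting the homotopy supplied by $\mathbf{f}$, and then use the bi-expansion hypothesis to recognize $\langle\mathbf{f^{\sharp}}\rangle$ as a shape morphism. Your explicit remark that the bi-expansion is what makes $\mathbf{q_*}$ an HPol$_*$-expansion of the target is a point the paper leaves implicit, and is worth stating. Where you genuinely diverge is the continuity step, and that is also where your argument is not yet complete. You factor $f_{\mu}^{\sharp}=\mathcal{S}\circ\widehat{f_{\mu}}$ through the compact-open loop space and then assert that $\mathcal{S}:\Omega(Y_{\mu},y_{\mu})\to Sh((I,\dot{I}),(Y_{\mu},y_{\mu}))$ is continuous; since the target is discrete (by \cite[Corollary 1]{CM}, as $Y_{\mu}$ is a polyhedron), this assertion is exactly the claim that each based-homotopy class of loops is open in $\Omega(Y_{\mu},y_{\mu})$. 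That is true for polyhedra (it follows from local contractibility, or equivalently from discreteness of $\pi_1^{qtop}$ of a semi-locally simply connected, locally path-connected space), but you flag it as ``the main obstacle'' rather than proving it, so as written this is the one real gap. The paper sidesteps the loop space entirely: it uses discreteness of $Sh((I,\dot{I}),(Y_{\mu},y_{\mu}))$ together with local contractibility of the \emph{source} polyhedron $X_{f(\mu)}$ to show directly that $f_{\mu}^{\sharp}$ is locally constant (a path $\alpha$ from $x$ to $x'$ inside a contractible neighbourhood yields the homotopy $H(t,s)=f_{\mu}([\alpha(s),t])$ rel $\dot{I}$, so $\mathcal{S}(l_{x\mu})=\mathcal{S}(l_{x'\mu})$), which is shorter and avoids any appeal to the quotient topology on the loop space.

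A second, milder difference: for the compatibility condition you build a homotopy $G(x,s)=\mathcal{S}(t\mapsto H([x,t],s))$ into the shape-morphism set, whose continuity again rests on the unproved continuity of $\mathcal{S}$. This is more than you need. Since restricting the homotopy of (\ref{hom1}) to $\{x\}\times I$ already gives $l_{p_{f(\mu)\lambda}(x)\mu}\simeq (q_{\mu\mu'})_*\circ l_{p_{f(\mu')\lambda}(x)\mu'}$ rel $\dot{I}$ for each fixed $x$, applying $\mathcal{S}$ yields pointwise \emph{equality} $f_{\mu}^{\sharp}\circ p_{f(\mu)\lambda}=(q_{\mu\mu'})_*\circ f_{\mu'}^{\sharp}\circ p_{f(\mu')\lambda}$, which is the paper's conclusion and trivially implies the required homotopy without constructing $G$ at all. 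If you supply the openness-of-homotopy-classes lemma, your route closes; otherwise, switch the continuity argument to the local-constancy one.
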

\begin{proof}
With the above notation, first we show that $f_{\mu}^{\sharp}:X_{f(\mu)}\rightarrow Sh((I, \dot{I}),(Y_{\mu},y_{\mu}))$ is continuous. Since $Y_{\mu}$ is a polyhedron, the space $Sh((I, \dot{I}),(Y_{\mu},y_{\mu}))$ is discrete by \cite[Corollary 1]{CM}. Therefore, it is sufficient to show that $f_{\mu}^{\sharp}$ is locally constant. Let $x\in X_{f(\mu)}$. Since $X_{f(\mu)}$ is polyhedron, there is an open neighborhood $V_x$ of $x$ that is contractible to $x$ in $X_{f(\mu)}$. We will show that $f_{\mu}^{\sharp}$ is constant on $V_x$. Let $x'\in V_x$, then by path connectedness of $V_x$, there exists a path $\alpha:I\rightarrow X_{f(\mu)}$ such that $\alpha(0)=x$ and $\alpha(1)=x'$. We define the map $H:I\times I\rightarrow Y_{\mu}$ by $H(t,s)=f_{\mu}([\alpha(s),t])$. Since $f_{\mu}$ and $\alpha$ are continuous and $V_x$ is contractible to $x$ in $X_{f(\mu)}$, the map $H$ is well-defined and continuous. Moreover, $H$ is a relative homotopy between $f_{\mu}([x,-])$ and $f_{\mu}([x',-])$. Hence $l_{x\mu}\simeq l_{x'\mu}\ (rel\{\dot{I}\})$ and so $\mathcal{S}(l_{x\mu})=\mathcal{S}(l_{x'\mu})$. Therefore $f_{\mu}^{\sharp}(x)=f_{\mu}^{\sharp}(x')$ and so $f_{\mu}^{\sharp}$ is constant on $V_x$. Finally, we conclude that $f_{\mu}^{\sharp}$ is continuous.

Now, let $\mathbf{p}:(X,x)\rightarrow (\mathbf{X},\mathbf{x})$ be  an HPol$_*$-expansion of $(X,x)$ and $\mathbf{q}:(Y,y)\rightarrow (\mathbf{Y},\mathbf{y})$  be a bi-expansion of $(Y,y)$. The map $\mathbf{f}^{\sharp}$ is a morphism in pro-HTop$_*$.  Indeed, for any pair $\mu'\geq\mu$, there is a $\lambda\geq f(\mu), f(\mu')$ such that
\begin{equation}\label{hom1}
f_{\mu}\circ\Sigma p_{f(\mu)\lambda}\simeq q_{\mu\mu'}\circ f_{\mu'}\circ\Sigma p_{f(\mu')\lambda} \ \ (rel\{\Sigma x_{\lambda}\}).
\end{equation} 
Also, for every $x\in X_{\lambda}$,
$$f_{\mu}^{\sharp}(p_{f(\mu)\lambda}(x)) = \mathcal{S}(l_{p_{f(\mu)\lambda}(x)\mu}),$$
and for every $t\in I$,
$$l_{p_{f(\mu)\lambda}(x)\mu}(t) = f_{\mu}([p_{f(\mu)\lambda}(x), t])= f_{\mu} \circ \Sigma p_{f(\mu)\lambda}([x,t])$$
$$(q_{\mu\mu'})_*\circ l_{p_{f(\mu')\lambda}(x)\mu'}(t)= q_{\mu\mu'}\circ f_{\mu'} ([p_{f(\mu')\lambda}(x), t])= q_{\mu\mu'}\circ f_{\mu'}\circ\Sigma p_{f(\mu')\lambda} ([x,t]).$$
By (\ref{hom1}) we have $l_{p_{f(\mu)\lambda}(x)\mu} \simeq (q_{\mu\mu'})_*\circ l_{p_{f(\mu')\lambda}(x)\mu'} \ \ ( rel \{\dot{I}\})$.
Therefore
$$f_{\mu}^{\sharp}\circ p_{f(\mu)\lambda}(x)= \mathcal{S}(l_{p_{f(\mu)\lambda}(x)\mu})= \mathcal{S}((q_{\mu\mu'})_*\circ l_{p_{f(\mu')\lambda}(x)\mu'})= (q_{\mu\mu'})_*\circ f_{\mu'}^{\sharp} (p_{f(\mu')\lambda}(x)).$$
\end{proof}

On the other hand, let $G:(X,x)\rightarrow (Sh((I, \dot{I}),(Y,y)),e_y)$ be a shape morphism represented by $\mathbf{g}:(\mathbf{X},\mathbf{x}) \rightarrow (\mathbf{Sh}((I, \dot{I}),(Y,y)),\mathbf{e_y})$ consists of  $g:M\rightarrow \Lambda$ and $g_{\mu}:(X_{g(\mu)},x_{g(\mu)})\rightarrow (Sh((I, \dot{I}),(Y_{\mu},y_{\mu})),e_{y_{\mu}})$. Then we define $G^{\flat}:\Sigma (X, x)\rightarrow (Y,y)$ represented by $\mathbf{g^{\flat}}:\Sigma (\mathbf{X}, \mathbf{x})\rightarrow (\mathbf{Y},\mathbf{y})$ in pro-HTop$_*$ consists of $g:M\rightarrow \Lambda$ and $g_{\mu}^{\flat}:\Sigma (X_{g(\mu)},x_{g(\mu)})\rightarrow (Y_{\mu},y_{\mu})$ given by $g_{\mu}^{\flat}([x,t])=g'_{\mu x}(t)$, where $g'_{\mu x}$ is a unique morphism in HTop$_*$ with $\mathcal{S}(g'_{\mu x})=g_{\mu}(x)$ (see \cite[Theorem 1.2.4]{MS}).
\begin{lemma}\label{lem2}
The map $G^{\flat}$ defined in the above is a shape morphism.
\end{lemma}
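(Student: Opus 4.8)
The plan is to verify the two properties that make $\mathbf{g}^{\flat}=(g,g_{\mu}^{\flat})$ a representative of a shape morphism, exactly mirroring the proof of Lemma \ref{lem1}: first that each $g_{\mu}^{\flat}:\Sigma(X_{g(\mu)},x_{g(\mu)})\to (Y_{\mu},y_{\mu})$ is a well-defined map in HTop$_*$, and then that the family $\{g_{\mu}^{\flat}\}$ satisfies the coherence condition of a morphism in pro-HTop$_*$. Since this construction is inverse to the assignment $F\mapsto F^{\sharp}$ of Lemma \ref{lem1}, the two lemmas together will yield the adjunction bijection announced in the introduction.

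For continuity I would first note that, because $Y_{\mu}$ is a polyhedron, the space $Sh((I,\dot{I}),(Y_{\mu},y_{\mu}))$ is discrete by \cite[Corollary 1]{CM}, so the continuous map $g_{\mu}$ is locally constant; as $X_{g(\mu)}$ is a polyhedron, hence locally connected, $g_{\mu}$ is constant on each open connected component $C$ of $X_{g(\mu)}$. On such a $C$ one may choose a single representative $\gamma_{C}$ of the common value, so that $g_{\mu}^{\flat}([x,t])=\gamma_{C}(t)$ for all $x\in C$. The assignment $(x,t)\mapsto \gamma_{C}(t)$ is continuous on each open set $C\times I$ and hence on all of $X_{g(\mu)}\times I$; since each $\gamma_{C}$ is a based path it sends $X_{g(\mu)}\times\dot{I}$ to $y_{\mu}$, and on the basepoint component, where $g_{\mu}$ takes the trivial value $e_{y_{\mu}}$, it sends $\{x_{g(\mu)}\}\times I$ to $y_{\mu}$, so the map descends continuously to the reduced suspension. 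Thus $g_{\mu}^{\flat}\in$ HTop$_*$.

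For the pro-HTop$_*$ coherence, fix a related pair $\mu\leq\mu'$ and take $\lambda\geq g(\mu),g(\mu')$ furnished by the fact that $\mathbf{g}$ is a pro-morphism, so that $(q_{\mu\mu'})_{*}\circ g_{\mu'}\circ p_{g(\mu')\lambda}\simeq g_{\mu}\circ p_{g(\mu)\lambda}$ as maps into $Sh((I,\dot{I}),(Y_{\mu},y_{\mu}))$. Evaluating at $x\in X_{\lambda}$, using $(q_{\mu\mu'})_{*}\mathcal{S}(\gamma)=\mathcal{S}(q_{\mu\mu'}\circ\gamma)$ together with the defining relation $g_{\nu}(x)=\mathcal{S}(g'_{\nu x})$, this reads $\mathcal{S}(q_{\mu\mu'}\circ g'_{\mu',p_{g(\mu')\lambda}(x)})=\mathcal{S}(g'_{\mu,p_{g(\mu)\lambda}(x)})$; since $Y_{\mu}$ is a polyhedron, the uniqueness in \cite[Theorem 1.2.4]{MS} upgrades this equality of shape morphisms to a relative homotopy $q_{\mu\mu'}\circ g'_{\mu',p_{g(\mu')\lambda}(x)}\simeq g'_{\mu,p_{g(\mu)\lambda}(x)}\ (rel\{\dot{I}\})$ for each individual $x$.

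The main obstacle is that these are only pointwise (in $x$) homotopies, whereas a morphism in pro-HTop$_*$ requires a \emph{single} homotopy $H:\Sigma X_{\lambda}\times I\to Y_{\mu}$, continuous in $x$ and relative to the basepoint $\Sigma x_{\lambda}$, between $q_{\mu\mu'}\circ g_{\mu'}^{\flat}\circ\Sigma p_{g(\mu')\lambda}$ and $g_{\mu}^{\flat}\circ\Sigma p_{g(\mu)\lambda}$. I would resolve this by the same local-constancy device: on each open connected component $C$ of the polyhedron $X_{\lambda}$ both composites $g_{\mu}\circ p_{g(\mu)\lambda}$ and $g_{\mu'}\circ p_{g(\mu')\lambda}$ are constant, so the representatives $g'_{\mu,p_{g(\mu)\lambda}(x)}=:\alpha_{C}$ and $g'_{\mu',p_{g(\mu')\lambda}(x)}=:\beta_{C}$ do not depend on $x\in C$. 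Hence a single rel-$\dot{I}$ homotopy $K_{C}$ between $q_{\mu\mu'}\circ\beta_{C}$ and $\alpha_{C}$ serves all of $C$, and setting $H([x,t],s)=K_{C}(t,s)$ for $x\in C$ gives a map continuous on each open piece $C\times I\times I$, hence globally. Because each $K_{C}$ is relative to $\dot{I}$ and the basepoint component carries everything to $y_{\mu}$, $H$ descends to $\Sigma X_{\lambda}\times I$ and is relative to $\Sigma x_{\lambda}$. This produces the required homotopy, verifying the coherence condition, so $\mathbf{g}^{\flat}$ represents a shape morphism $G^{\flat}$.
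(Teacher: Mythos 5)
Your proof is correct and follows the same skeleton as the paper's: discreteness of $Sh((I,\dot{I}),(Y_{\mu},y_{\mu}))$ for $Y_{\mu}$ a polyhedron (\cite[Corollary 1]{CM}) drives the continuity of $g_{\mu}^{\flat}$, and the uniqueness statement of \cite[Theorem 1.2.4]{MS} converts the pro-morphism condition on $\mathbf{g}$ into relative homotopies of the chosen path representatives. The packaging differs in two places, both to your advantage. For continuity, the paper factors $\overline{g_{\mu}^{\flat}}$ as $e_{\mu}\circ(g_{\mu}\times id)$ through an evaluation map $e_{\mu}(F,t)=F'(t)$ on the discrete morphism space, whereas you use local constancy of $g_{\mu}$ on the open components of the polyhedron $X_{g(\mu)}$ and pick one path representative per component; the two arguments are equivalent in substance, but yours has the side benefit of making the definition $g_{\mu}^{\flat}([x,t])=g'_{\mu x}(t)$ honest, since $g'_{\mu x}$ is a priori only a homotopy class and a coherent choice of representatives is needed anyway. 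More significantly, at the coherence step the paper derives the pointwise relative homotopies $g'_{\mu p_{g(\mu)\lambda}(x)}\simeq q_{\mu\mu'}\circ g'_{\mu' p_{g(\mu')\lambda}(x)}\ (rel\{\dot{I}\})$ from (\ref{hom3}) and then simply asserts ``and so'' the global homotopy $g_{\mu}^{\flat}\circ\Sigma p_{g(\mu)\lambda}\simeq q_{\mu\mu'}\circ g_{\mu'}^{\flat}\circ\Sigma p_{g(\mu')\lambda}\ (rel\{\Sigma x_{\lambda}\})$; you explicitly assemble the componentwise homotopies $K_{C}$ into a single continuous $H$ on $\Sigma X_{\lambda}\times I$, which is the step the paper leaves implicit. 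The only detail worth making explicit in your write-up is that on the basepoint component you must choose the constant path (and the constant homotopy) as representative of the trivial class, so that the map and the homotopy genuinely descend to the suspension and are relative to $\Sigma x_{\lambda}$.
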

\begin{proof}
First we show that $g_{\mu}^{\flat}$ is continuous. It is sufficient to show that $\overline{g_{\mu}^{\flat}}: (X_{g(\mu)}\times I,\{x_{g(\mu)}\}\times \dot{I})\rightarrow (Y_{\mu},y_{\mu})$ is continuous. We claim that the map $e_{\mu}:Sh((I, \dot{I}),(Y_{\mu},y_{\mu}))\times I\rightarrow Y_{\mu}$ given by $e_{\mu}(F,t)=F'(t)$ is continuous, where $F'$ is a unique morphism in HTop$_*$ with $\mathcal{S}(F')=F$ (see \cite[Theorem 1.2.4]{MS}). To prove the continuity of $e_{\mu}$, let $U$ be an open set containing an arbitrary point $e_{\mu}(F,t)=F'(t)$. Since $F'$ is continuous, there is an open neighbourhood $V$ of $t$ in $I$ such that $F'(V)\subseteq U$. Hence the set $\{F\}\times V$ is an open neighbourhood of $(F,t)$ in $Sh((I, \dot{I}),(Y_{\mu},y_{\mu}))\times I$ such that $e_{\mu}(\{F\}\times V)\subseteq U$. Now, the map $\overline{g_{\mu}^{\flat}}$ is equal to the composition $e_{\mu}\circ (g_{\mu}\times id)$ and so it is continuous.

Let $\mathbf{p}:(X,x)\rightarrow (\mathbf{X},\mathbf{x})$ and $\mathbf{q}:(Y,y)\rightarrow (\mathbf{Y},\mathbf{y})$ be  HPol$_*$-expansions of $(X,x)$ and $(Y,y)$, respectively. The map $\mathbf{g^{\flat}}:\Sigma (\mathbf{X}, \mathbf{x})\rightarrow (\mathbf{Y},\mathbf{y})$ is a morphism in pro-HTop$_*$. To prove this, let $\mu'\geq\mu$, then there is a $\lambda\geq g(\mu), g(\mu')$ such that
\begin{equation}\label{hom2}
(g_{\mu\mu'})_*\circ g_{\mu'} \circ p_{g(\mu')\lambda}\simeq g_{\mu}\circ p_{g(\mu)\lambda} \ \ (rel\{x_{\lambda}\}).
\end{equation} 
 Since $Y_{\mu}$ is a polyhedron, the space $Sh((I, \dot{I}),(Y_{\mu},y_{\mu}))$ is discrete by \cite[Corollary 1]{CM}. But homotopic maps in a discrete space are equal, so
 \begin{equation}\label{hom3}
(g_{\mu\mu'})_*\circ g_{\mu'} \circ p_{g(\mu')\lambda}= g_{\mu}\circ p_{g(\mu)\lambda}.
\end{equation} 
Also, for every $x\in X_{\lambda}$ and $t\in I$,
$$ g_{\mu}^{\flat} \circ \Sigma p_{g(\mu)\lambda}([x,t]) = g_{\mu}^{\flat}([p_{g(\mu)\lambda}(x), t])= g'_{\mu p_{g(\mu)\lambda}(x)}(t)$$
and
$$q_{\mu\mu'}\circ g_{\mu'}^{\flat} \circ \Sigma p_{g(\mu')\lambda}([x,t])= q_{\mu\mu'}\circ g_{\mu'}^{\flat}([p_{g(\mu')\lambda}(x), t])= q_{\mu\mu'}\circ g'_{\mu' p_{g(\mu')\lambda}(x)}(t).$$
Also,
$$\mathcal{S}(g'_{\mu p_{g(\mu)\lambda}(x)})= g_{\mu}( p_{g(\mu)\lambda}(x))$$
and
$$\mathcal{S}(q_{\mu\mu'}\circ g'_{\mu' p_{g(\mu)\lambda}(x)})= q_{\mu\mu'}\circ g_{\mu'}( p_{g(\mu')\lambda}(x)).$$
Hence, using (\ref{hom3}) and \cite[Theorem 1.2.4]{CM},
$$g'_{\mu p_{g(\mu)\lambda}(x)}\simeq q_{\mu\mu'}\circ g'_{\mu' p_{g(\mu)\lambda}(x)} \ \ (rel \{\dot{I}\})$$
and so
$$ g_{\mu}^{\flat} \circ \Sigma p_{g(\mu)\lambda} \simeq q_{\mu\mu'}\circ g_{\mu'}^{\flat} \circ \Sigma p_{g(\mu')\lambda} \ \ (rel\{\Sigma x_{\lambda}\}).$$
\end{proof}

Let $\widetilde{Sh}_*$ be a subcategory of Sh$_*$ consists of all pointed topological spaces having bi-expansions. In follow, we conclude some results in the subcategory $\widetilde{Sh}_*$. It is well-known that the pair $(\Sigma , \Omega)$ is an adjoint pair of functors on hTop$_*$. In the following theorem we prove similar result on subcategory $\widetilde{Sh}_*$.
\begin{theorem}\label{adj}
For every $(Y,y)\in \widetilde{Sh}_*$ and $(X,x)\in Sh_*$, there is a natural bijection
\begin{equation}\label{eq}
Sh (\Sigma (X, x), (Y,y))\cong Sh((X,x),(Sh((I, \dot{I}),(Y,y)),e_y)).
\end{equation}
\end{theorem}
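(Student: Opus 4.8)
The plan is to promote the two constructions $F\mapsto F^{\sharp}$ and $G\mapsto G^{\flat}$ already in hand to a pair of mutually inverse, natural maps. Fix once and for all an HPol$_*$-expansion $\mathbf{p}:(X,x)\to(\mathbf{X},\mathbf{x})$ and a bi-expansion $\mathbf{q}:(Y,y)\to(\mathbf{Y},\mathbf{y})$; then $\mathbf{\Sigma p}$ is an HPol$_*$-expansion of $\Sigma(X,x)$, and the defining property of a bi-expansion makes $\mathbf{q_*}:Sh((I,\dot{I}),(Y,y))\to\mathbf{Sh}((I,\dot{I}),(Y,y))$ an HPol$_*$-expansion of $Sh((I,\dot{I}),(Y,y))$. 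With these expansions I define $\Phi:Sh(\Sigma(X,x),(Y,y))\to Sh((X,x),(Sh((I,\dot{I}),(Y,y)),e_y))$ by $\Phi(F)=F^{\sharp}$ and $\Psi$ in the reverse direction by $\Psi(G)=G^{\flat}$; Lemmas \ref{lem1} and \ref{lem2} guarantee that these land in $Sh_*$.

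The first thing I would verify is that $\Phi$ and $\Psi$ are independent of the chosen representing pro-morphism. If $\mathbf{f}\sim\mathbf{f}'$ in pro-HTop$_*$, then for each $\mu$ there is a $\lambda$ with $f_{\mu}\circ\Sigma p_{f(\mu)\lambda}\simeq f'_{\mu}\circ\Sigma p_{f'(\mu)\lambda}$ $(rel\{\Sigma x_{\lambda}\})$; running the relative-homotopy argument of Lemma \ref{lem1} and then composing into the discrete space $Sh((I,\dot{I}),(Y_{\mu},y_{\mu}))$ (discrete by \cite[Corollary 1]{CM}) yields $f^{\sharp}_{\mu}\circ p_{f(\mu)\lambda}=f'^{\sharp}_{\mu}\circ p_{f'(\mu)\lambda}$, whence $\mathbf{f}^{\sharp}\sim\mathbf{f'}^{\sharp}$. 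The corresponding statement for $\flat$ uses discreteness exactly as in Lemma \ref{lem2}. Independence of the particular expansions then follows from the natural isomorphisms $\mathbf{i},\mathbf{j}$ between expansions, so $\Phi$ and $\Psi$ are well-defined on shape morphisms.

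The heart of the argument is that $\Phi$ and $\Psi$ are mutually inverse, which I would check componentwise. For $(F^{\sharp})^{\flat}$, by definition $(f^{\sharp}_{\mu})^{\flat}([x,t])=(f^{\sharp}_{\mu})'_{x}(t)$, where $(f^{\sharp}_{\mu})'_{x}$ is the unique map in HTop$_*$ with $\mathcal{S}((f^{\sharp}_{\mu})'_{x})=f^{\sharp}_{\mu}(x)=\mathcal{S}(l_{x\mu})$; the uniqueness clause of \cite[Theorem 1.2.4]{MS} forces $(f^{\sharp}_{\mu})'_{x}=l_{x\mu}$, so $(f^{\sharp}_{\mu})^{\flat}([x,t])=l_{x\mu}(t)=f_{\mu}([x,t])$ and hence $(F^{\sharp})^{\flat}=F$. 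Dually, $(g^{\flat}_{\mu})^{\sharp}(x)=\mathcal{S}(l_{x\mu})$ with $l_{x\mu}(t)=g^{\flat}_{\mu}([x,t])=g'_{\mu x}(t)$, and since $\mathcal{S}(g'_{\mu x})=g_{\mu}(x)$ we get $(g^{\flat}_{\mu})^{\sharp}(x)=g_{\mu}(x)$, so $(G^{\flat})^{\sharp}=G$. These two identities give $\Psi\circ\Phi=\mathrm{id}$ and $\Phi\circ\Psi=\mathrm{id}$.

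Finally I would establish naturality in both variables: for shape morphisms $\alpha:(X',x')\to(X,x)$ and $\beta:(Y,y)\to(Y',y')$ with $(Y',y')\in\widetilde{Sh}_*$, the square to check is $(\beta\circ F\circ\Sigma\alpha)^{\sharp}=Sh((I,\dot{I}),\beta)\circ F^{\sharp}\circ\alpha$. After passing to expansions this reduces to the componentwise comparison of the loops $l_{x\mu}$ attached to each side, which follows from the definition of $\sharp$ together with the functoriality of $\mathcal{S}$ and of the induced maps $(q_{\mu\mu'})_*$. The main obstacle I anticipate is not any single computation but the consistent bookkeeping of index functions and, above all, the essential use of the bi-expansion hypothesis: it is precisely the fact that $\mathbf{q_*}$ is an HPol$_*$-expansion (and not merely an inverse limit in pro-HTop$_*$) that allows the pro-morphism $\mathbf{f}^{\sharp}$ to descend to a genuine shape morphism $F^{\sharp}$. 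Consequently, naturality in the $Y$-variable must be carried out while verifying that one stays inside $\widetilde{Sh}_*$ throughout, which is why the statement restricts the target to $\widetilde{Sh}_*$.
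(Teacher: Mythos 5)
Your proposal is correct and takes essentially the same approach as the paper: the paper defines $\tau_{XY}(F)=F^{\sharp}$ and $\theta_{XY}(G)=G^{\flat}$ via Lemmas \ref{lem1} and \ref{lem2} and then simply asserts that the two maps are mutually inverse and natural. You supply the componentwise verification (correctly, using the uniqueness clause of \cite[Theorem 1.2.4]{MS} to identify $(f_{\mu}^{\sharp})'_{x}$ with $l_{x\mu}$) that the paper leaves as ``easy to see.''
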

\begin{proof}
Let $\mathbf{p}:(X,x)\rightarrow (\mathbf{X},\mathbf{x})$ be an HPol$_*$-expansion of $(X,x)$ and $\mathbf{q}:(Y,y)\rightarrow (\mathbf{Y},\mathbf{y})$  be a bi-expansion of $(Y,y)$. We define
$$\tau_{XY}: Sh (\Sigma (X, x), (Y,y))\rightarrow Sh((X,x),(Sh((I, \dot{I}),(Y,y)),e_y))$$ by $\tau_{XY}(F)=F^{\sharp}$ and
\[\theta_{XY}:Sh((X,x),(Sh((I, \dot{I}),(Y,y)),e_y))\rightarrow Sh (\Sigma (X, x), (Y,y))\] by $\theta_{XY}(G)=G^{\flat}$. By Lemmas \ref{lem1} and \ref{lem2}, the maps $\tau_{XY}$ and $\theta_{XY}$ are well-defined. It is easy to see that $\theta_{XY}\circ\tau_{XY}=id$, $\tau_{XY}\circ\theta_{XY}=id$ and $\tau_{XY}$ is natural in each variable. Hence the result holds.
\end{proof}

Using natural bijection (\ref{eq}), one can see that the functor $Sh((I, \dot{I}),-)$ preserves inverse limits such as products, pullbacks, kernels, nested intersections and completions, provided inverse limit exists in the subcategory $\widetilde{Sh}_*$. Also, the functor $\Sigma$ preserves direct limits of connected spaces in this subcategory. Hence if $(X\times Y, (x,y))$ is a product of pointed spaces $(X, x)$ and $(Y,y)$ in the subcategory $\widetilde{Sh}_*$, then 
$$Sh((I, \dot{I}), (X\times Y, (x,y)))=Sh((I, \dot{I}), (X, x))\times Sh((I, \dot{I}), (Y,y))$$
and so
$$\check{\pi}_1(X\times Y, (x,y))=\check{\pi}_1(X, x)\times\check{\pi}_1(Y, y).$$
\begin{lemma}\label{con} 
The mappings $\tau_{XY}$ and $\theta_{XY}$ are continuous.
\end{lemma}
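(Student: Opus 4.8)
\section*{Proof proposal for Lemma \ref{con}}

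The plan is to exploit the description of the Cuchillo-Ibanez et al. topology as an inverse-limit (initial) topology and to reduce continuity to a single commuting square at each polyhedral level. Recall from \cite{CM} (see also \cite{Na}) that, for the chosen expansion $\mathbf{q}:(Y,y)\to(\mathbf{Y},\mathbf{y})$, the space $Sh(\Sigma(X,x),(Y,y))$ carries the initial topology with respect to the projections
$$\Pi_\mu: Sh(\Sigma(X,x),(Y,y))\to Sh(\Sigma(X,x),(Y_\mu,y_\mu)),\qquad \Pi_\mu(F)=\mathcal{S}(q_\mu)\circ F,$$
and that each target $Sh(\Sigma(X,x),(Y_\mu,y_\mu))$ is \emph{discrete}, since $Y_\mu$ is a polyhedron (by \cite[Corollary 1]{CM}). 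Likewise, using the bi-expansion property, the space $Sh((X,x),(Sh((I,\dot{I}),(Y,y)),e_y))$ carries the initial topology induced by the projections $\Pi_\mu^{*}(G)=\mathcal{S}((q_\mu)_*)\circ G$ into the discrete sets $Sh((X,x),(Sh((I,\dot{I}),(Y_\mu,y_\mu)),e_{y_\mu}))$.

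By the universal property of the initial topology, to prove that $\tau_{XY}$ is continuous it suffices to show that each composite $\Pi_\mu^{*}\circ\tau_{XY}$ is continuous as a map into a discrete space. The key step is the commuting square
$$\Pi_\mu^{*}\circ\tau_{XY}=\tau_{XY_\mu}\circ\Pi_\mu,$$
where $\tau_{XY_\mu}$ is the level-$\mu$ adjunction bijection supplied by Theorem \ref{adj} applied to $(Y_\mu,y_\mu)\in\widetilde{Sh}_*$ (note that $Sh((I,\dot{I}),(Y_\mu,y_\mu))$ is a $0$-dimensional polyhedron). I would verify this square directly on a representative $\mathbf{f}=(f,f_\mu)$ of $F$: since $Y_\mu$ is a polyhedron, $\Pi_\mu(F)$ is the shape of the genuine map $f_\mu\circ\Sigma p_{f(\mu)}:\Sigma X\to Y_\mu$, whose $\sharp$-transform sends $x'$ to $\mathcal{S}(l_{p_{f(\mu)}(x')\mu})=f_\mu^{\sharp}(p_{f(\mu)}(x'))$, so $\tau_{XY_\mu}(\Pi_\mu(F))=\mathcal{S}(f_\mu^{\sharp}\circ p_{f(\mu)})$; on the other hand $\Pi_\mu^{*}(F^{\sharp})$ is likewise represented by the composite $f_\mu^{\sharp}\circ p_{f(\mu)}$. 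Both sides therefore equal $\mathcal{S}(f_\mu^{\sharp}\circ p_{f(\mu)})$, which is precisely the naturality of $\tau$ in the variable $Y$ along the shape projection $\mathcal{S}(q_\mu)$.

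Granting the square, continuity of $\tau_{XY}$ is immediate: $\Pi_\mu$ is continuous by definition of the topology on the source, and $\tau_{XY_\mu}$ is a map between discrete spaces, hence continuous, so $\Pi_\mu^{*}\circ\tau_{XY}=\tau_{XY_\mu}\circ\Pi_\mu$ is continuous for every $\mu$. For $\theta_{XY}=\tau_{XY}^{-1}$ I would run the symmetric argument: reading the same square as $\Pi_\mu\circ\theta_{XY}=\theta_{XY_\mu}\circ\Pi_\mu^{*}$ with $\theta_{XY_\mu}=\tau_{XY_\mu}^{-1}$, the right-hand side is again the composite of the continuous source projection $\Pi_\mu^{*}$ with a map of discrete spaces, so $\theta_{XY}$ is continuous by the universal property of the initial topology on $Sh(\Sigma(X,x),(Y,y))$.

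I expect the main obstacle to be the bookkeeping in the commuting square, specifically identifying the defining projection $\mathcal{S}((q_\mu)_*)$ on the mapping-space side with the functor $Sh((I,\dot{I}),-)$ applied to the shape projection $\mathcal{S}(q_\mu)$; this relies on the fact (from the bi-expansion hypothesis together with \cite[Theorem 2]{CM}) that the induced system $\mathbf{Sh}((I,\dot{I}),(Y,y))$ has bonding and projection morphisms $(q_{\mu\mu'})_*$ and $(q_\mu)_*$. Once this identification is in place the level sets are discrete and the remainder of the argument is formal.
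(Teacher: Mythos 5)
Your proposal is correct and is essentially the paper's own argument in different packaging: the paper's direct verification that $\tau_{XY}(V^{F^{\flat}}_{\mu})\subseteq V^{F}_{\mu}$ for each basic open set $V^{F}_{\mu}$ is exactly the statement that $\Pi_{\mu}^{*}\circ\tau_{XY}$ factors through the level-$\mu$ projection, which is your commuting square, and the representative computation $l_{p_{f(\mu)}(x)\mu}(t)=f_{\mu}\circ\Sigma p_{f(\mu)}([x,t])$ underlying it is the same in both. Your initial-topology formulation is a clean way to organize the bookkeeping (and, unlike the paper, you spell out the symmetric argument for $\theta_{XY}$ rather than deferring to ``similarly''), but no new idea is needed beyond what the paper uses.
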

\begin{proof}
First, we show that $\tau_{XY}$ is continuous. Let $V^F_{\mu}$ be a basis element of \linebreak $Sh((X,x),(Sh((I, \dot{I}),(Y,y)),e_y))$ containing $F$. We will show that $\tau_{XY}(V^{F^\flat}_{\mu})\subseteq V^F_{\mu}$. Let $G\in V^{F{^\flat}}_{\mu}$. By definition, $q_{\mu}\circ F^\flat= q_{\mu}\circ G$ as homotopy classes to $Y_{\mu}$, or equivalently $f_{\mu}^{\flat}\circ \Sigma p_{f(\mu)}\simeq g_{\mu}\circ \Sigma p_{g(\mu)} \ \ (rel\{\Sigma x\}) $. It is sufficient to show that $(q_{\mu})_*\circ F= (q_{\mu})_*\circ G^{\sharp}$ as homotopy classes to $Sh(I,Y_{\mu})$ or equivalently $f_{\mu}\circ p_{f(\mu)}\simeq g_{\mu}^{\sharp}\circ p_{g(\mu)} \ \ (rel\{x\})$. For every $x\in X$,
$$g_{\mu}^{\sharp}\circ p_{g(\mu)} (x)=\mathcal{S}(l_{p_{g(\mu)} (x)\mu}),$$
and for every $t\in I$,
$$l_{p_{g(\mu)}(x)\mu}(t) = g_{\mu}([p_{g(\mu)}(x), t])= g_{\mu} \circ \Sigma p_{g(\mu)}([x,t]).$$
Also
\begin{align}
f_{\mu}^{\flat}\circ \Sigma p_{f(\mu)} ([x,t])&= f_{\mu}^{\flat}([p_{f(\mu)}(x), t]) \nonumber \\
& = f'_{\mu p_{f(\mu)}(x)}(t), \nonumber
\end{align}
where $\mathcal{S}(f'_{\mu p_{f(\mu)}(x)})=f_{\mu}(p_{f(\mu)}(x))$.
Since $f_{\mu}^{\flat}\circ \Sigma p_{f(\mu)}\simeq g_{\mu}\circ \Sigma p_{g(\mu)}\ \  ( rel\{\Sigma x\})$, by the above equalities, $l_{p_{g(\mu)}(x)\mu}\simeq f'_{\mu p_{f(\mu)}(x)}\ \ ( rel\{\dot{I}\})$.
Thus
$$g_{\mu}^{\sharp}\circ p_{g(\mu)} (x)=\mathcal{S}(l_{p_{g(\mu)}(x)\mu}) = \mathcal{S}(f'_{\mu p_{f(\mu)}(x)})=f_{\mu}(p_{f(\mu)}(x)).$$
So $\tau_{XY}(G)=G^{\sharp}\in V^F_{\mu}$, and therefore $\tau_{XY}$ is continuous. Similarly, $\theta_{XY}$ is continuous.
\end{proof}

In particular, we can conclude that for any pointed topological space $(X,x)$, 
\linebreak $Sh((I, \dot{I}),(Sh((I, \dot{I}),(X,x)),e_x))\cong Sh((I^2, \dot{I^2}),(X,x))$. We know that for any pointed space $(X,x)$ and for all $1\leq k\leq n-1$, $\pi_n(X,x)\cong \pi_{n-k}(\Omega(X,x), e_x)$. As a result of Theorem \ref{adj}, we have the following corollary:
\begin{corollary}\label{col}
Let $(X,x)$ be a pointed topological space in $\widetilde{Sh}_*$. Then for all $1\leq k \leq n-1$
$$\check{\pi}_n(X,x)\cong \check{\pi}_{n-k}(Sh((S^k, *),(X,x)), e_x).$$
\end{corollary}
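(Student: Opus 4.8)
The plan is to deduce everything from the adjunction of Theorem \ref{adj} together with two standard identifications. Recall that for every pointed space $(Z,z)$ one has $\check{\pi}_m(Z,z)=Sh((S^m,*),(Z,z))$ (Moszy\'nska--Bilan), that the quotient $I/\dot{I}\cong S^1$ turns a shape loop into an element of $Sh((S^1,*),-)$ so that $Sh((I,\dot{I}),-)=Sh((S^1,*),-)$, and that the reduced suspension satisfies $\Sigma S^{m}\cong S^{m+1}$. Under these identifications the corollary is precisely a $k$-fold iteration of the case $k=1$, the underlying fact being $S^n\cong\Sigma^{k}S^{n-k}$; I would run this as an induction on $k$.

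For the base case $k=1$ I would write $S^n=\Sigma S^{n-1}$ and apply Theorem \ref{adj} with target $(Y,y)=(X,x)\in\widetilde{Sh}_*$, obtaining $Sh(\Sigma(S^{n-1},*),(X,x))\cong Sh((S^{n-1},*),(Sh((I,\dot{I}),(X,x)),e_x))$. Rewriting both sides via $\check{\pi}_m=Sh((S^m,*),-)$ and $Sh((I,\dot{I}),-)=Sh((S^1,*),-)$ yields $\check{\pi}_n(X,x)\cong\check{\pi}_{n-1}(Sh((S^1,*),(X,x)),e_x)$. This step is clean because it invokes the adjunction only once, and only with the target $(X,x)$, which lies in $\widetilde{Sh}_*$ by hypothesis.

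For the inductive step I would apply the base case to the pointed space $(W,e_x):=(Sh((S^k,*),(X,x)),e_x)$, giving $\check{\pi}_{n-k}(W,e_x)\cong\check{\pi}_{n-k-1}(Sh((S^1,*),(W,e_x)),e)$, and then identify $Sh((S^1,*),(W,e_x))=Sh((I,\dot{I}),(Sh((I,\dot{I}),\dots,(X,x))))\cong Sh((I^{k+1},\dot{I^{k+1}}),(X,x))\cong Sh((S^{k+1},*),(X,x))$, using the iterated shape-path identity recorded after Lemma \ref{con} together with $I^{k+1}/\dot{I^{k+1}}\cong S^{k+1}$. Composing the two isomorphisms advances the induction from $k$ to $k+1$, and the whole statement follows.

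The hard part will be justifying the re-application of Theorem \ref{adj} at each stage: the adjunction demands that its target lie in $\widetilde{Sh}_*$, so the induction forces me to prove that $Sh((S^k,*),(X,x))$ (equivalently the iterated shape-path space $Sh((I^k,\dot{I^k}),(X,x))$) again admits a bi-expansion whenever $(X,x)$ does. This is where the bi-expansion hypothesis must do genuine work: starting from a bi-expansion $\mathbf{p}$ of $(X,x)$ I would try to show that $\mathbf{p_*}$ is itself a bi-expansion of the shape-path space, using the discreteness of $Sh((I,\dot{I}),(X_\lambda,x_\lambda))$ for polyhedral $X_\lambda$ (\cite[Corollary 1]{CM}) and the HPol$_*$-expansion/finiteness criteria of Lemma \ref{n1}. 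Establishing this closure of $\widetilde{Sh}_*$ under $Sh((S^k,*),-)$ — and, in tandem, checking that the iterated shape-path identity genuinely reproduces $Sh((S^{k+1},*),-)$ rather than collapsing — is the delicate point; everything else is formal rewriting through the adjunction and the quotients $I^m/\dot{I^m}\cong S^m$. I expect the verification that $\mathbf{p_*}$ remains an HPol$_*$-expansion after a second application of the shape-path functor to be the crux of the argument.
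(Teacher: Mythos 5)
Your proposal follows essentially the same route as the paper: the paper's proof writes $\check{\pi}_n(X,x)=Sh((\Sigma^{n}S^0,*),(X,x))$ and peels off $k$ suspensions by iterating Theorem~\ref{adj}, arriving at $Sh((S^{n-k},*),(Sh((S^k,*),(X,x)),e_x))$, and your induction on $k$ together with the identifications $\Sigma S^{m}\cong S^{m+1}$, $Sh((I,\dot{I}),-)=Sh((S^1,*),-)$ and the iterated shape-path identity is just a reorganization of that single chain of bijections. The one substantive issue you flag --- that each re-application of Theorem~\ref{adj} beyond the first requires the intermediate target $Sh((S^j,*),(X,x))$ to again lie in $\widetilde{Sh}_*$, i.e.\ to admit a bi-expansion --- is genuinely needed for $k\geq 2$ and is \emph{not} addressed in the paper, whose proof chains the isomorphisms without verifying the hypothesis of Theorem~\ref{adj} at the intermediate stages (the definition of bi-expansion only guarantees that $\mathbf{p_*}$ is an HPol$_*$-expansion of the first shape-path space, not that this space in turn has a bi-expansion). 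So you have not found a different argument; you have found the paper's argument together with the gap it silently contains, and closing that gap along the lines you sketch (discreteness of $Sh((I,\dot{I}),(X_\lambda,x_\lambda))$ for polyhedral $X_\lambda$ via \cite[Corollary 1]{CM}, plus the expansion criteria behind Lemma~\ref{n1}) is exactly what a complete proof would have to supply; your proposal, as written, leaves it as a stated intention rather than a proof.
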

\begin{proof}
By the definition of the shape homotopy group and using Theorem \ref{adj} and Lemma \ref{con}, we have
\begin{align}
 \check{\pi}_n(X,x)& = Sh((S^n, *),(X,x))\cong Sh((\Sigma{} ^{n}S^0, *),(X,x)) \nonumber \\
& \cong Sh((\Sigma{} ^{n-k}S^0, *),(Sh((S^k, *),(X,x)),e_x)) \nonumber \\
& \cong  Sh((S^{n-k}, *),(Sh((S^k, *),(X,x)), e_x)) \nonumber \\
& =\check{\pi}_{n-k}(Sh((S^k, *),(X,x)), e_x). \nonumber
\end{align}
\end{proof}

In follow, we exhibit an example in which the above corollary and therefore Theorem \ref{adj} do not hold in the category  Sh$_*$. 
\begin{remark}
The pair $(\Sigma, Sh((I, \dot{I}),-))$ is not an adjoint pair of functors on the category Sh$_*$. By contrary, if the pair $(\Sigma, Sh((I, \dot{I}),-))$ is an adjoint pair on Sh$_*$, with the same argument we obtain  $\check{\pi}_n(X,x)\cong \check{\pi}_{n-k}(Sh((S^k, *),(X,x)), e_x)$, for all $1\leq k \leq n-1$ and for all pointed topological space $(X,x)$. But this isomorphism does not hold in general. Put $ X=S^2$ and $n=2$, we have $\check{\pi}_2(S^2)= \pi_2(S^2)=\Bbb{Z}$ while $\check{\pi}_{1}(Sh(S^1,S^2))$ is trivial. Note that, $S^2$ is a polyhedron and so $Sh(S^1,S^2)$ is discrete by \cite[Theorem 4.4]{Na}. Hence $\check{\pi}_{1}(Sh(S^1,S^2))$ is trivial.

\end{remark}

Nasri et al. in \cite{Na2} showed that  for any pointed topological space $(X,x)$, $\pi_n^{qtop}(X,x)\cong \pi_{n-k}^{qtop}(\Omega^k(X,x), e_x)$, for all $1\leq k \leq n-1$. In the following corollary we prove this result for $\check{\pi}_n^{top}$. The following result is an immediate consequence of Corollary \ref{col} and Lemma \ref{con}.
\begin{corollary}
Let $(X,x)$ be a pointed topological space in $\widetilde{Sh}_*$. Then for all $1\leq k \leq n-1$
$$\check{\pi}_n^{top}(X,x)\cong \check{\pi}_{n-k}^{top}(Sh((S^k, *),(X,x)), e_x).$$
\end{corollary}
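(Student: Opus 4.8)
The plan is to upgrade the \emph{group} isomorphism of Corollary~\ref{col} to an isomorphism of topological groups by showing that the very bijection constructed there is a homeomorphism for the shape topology. Recall that the isomorphism in Corollary~\ref{col} is assembled as a composite of three kinds of maps: (i) the bijections $Sh((S^m,*),(Z,z))\cong Sh((\Sigma^m S^0,*),(Z,z))$ induced by the based homeomorphism $\Sigma^m S^0\cong S^m$; (ii) the adjunction bijection $\tau$ of Theorem~\ref{adj}, applied $k$ times to peel off $k$ suspensions; and (iii) the collapse $Sh((I^k,\dot{I^k}),(X,x))\cong Sh((S^k,*),(X,x))$ coming from the based homeomorphism $I^k/\dot{I^k}\cong S^k$. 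Since Corollary~\ref{col} already identifies this composite as a group isomorphism, it suffices to check that each factor is a homeomorphism.

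For the factors of type (ii), this is precisely the content of Lemma~\ref{con}: $\tau_{XY}$ is continuous and its inverse $\theta_{XY}$ is continuous, so $\tau_{XY}$ is a homeomorphism, and iterating, the $k$-fold adjunction is a homeomorphism as well. Here one must observe that at each intermediate stage the target lies in $\widetilde{Sh}_*$, so that Theorem~\ref{adj} and Lemma~\ref{con} apply; this closure of $\widetilde{Sh}_*$ under $Sh((I,\dot{I}),-)$ is exactly what the bi-expansion hypothesis on $(X,x)$ guarantees, and it is the same fact already invoked in the proof of Corollary~\ref{col}.

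For the factors of type (i) and (iii), the plan is to argue directly from the description of the shape topology in terms of the basic open sets $V^F_\mu$ used in the proof of Lemma~\ref{con}. A based homeomorphism $h$ of the test space induces precomposition $F\mapsto F\circ\mathcal{S}(h)$ on the relevant $Sh$-set, and a based homeomorphism of the target induces postcomposition; in either case the induced map carries a basic open set $V^F_\mu$ to a basic open set, and so does its inverse, whence each is a homeomorphism. Composing all three kinds of homeomorphisms yields a homeomorphism, and, being simultaneously a group isomorphism by Corollary~\ref{col}, it is an isomorphism of topological groups, giving the asserted $\check{\pi}_n^{top}(X,x)\cong\check{\pi}_{n-k}^{top}(Sh((S^k,*),(X,x)),e_x)$.

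I expect the main obstacle to be verifying claims (i) and (iii), namely that the maps induced by homeomorphisms of the domain and target spaces are homeomorphisms for the shape topology. Although this is intuitively clear, it genuinely requires unwinding the definition of the topology on $Sh(-,-)$ and checking, at the level of the basic neighbourhoods $V^F_\mu$, that both the induced map and its inverse are open; keeping the intermediate targets inside $\widetilde{Sh}_*$ through the iterated use of $\tau$ is the other point that must be tracked with care.
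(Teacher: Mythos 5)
Your proposal is correct and follows essentially the same route as the paper, which simply declares the corollary an immediate consequence of Corollary~\ref{col} and Lemma~\ref{con}. You spell out in more detail than the paper does which factors of the composite bijection need to be checked to be homeomorphisms (and flag the need to keep intermediate targets in $\widetilde{Sh}_*$, a point the paper leaves implicit), but the underlying argument is the same.
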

\section*{Acknowledgements}
This research was supported by a grant from Ferdowsi University of Mashhad-Graduate Studies (No. 2/43171).







\section*{References}

\bibliography{mybibfile}

\end{document}